\newcommand{\F}{\mathbb{F}}
 \newcommand{\ds}{\displaystyle}
\theoremstyle{theorem}
\newtheorem{theorem}{Theorem}[section]
\newtheorem{example}[theorem]{Example}
\newtheorem{proposition}[theorem]{Proposition}
\theoremstyle{defi}
\newtheorem{definition}[theorem]{Definition}
\begin{document}\title{\textbf{Unitary Units of The Group Algebra ${\mathbb{F}}_{2^k}Q_{8}$}}

\author{Leo Creedon, Joe Gildea\\
School of Engineering\\
Institute of Technology Sligo\\
Sligo, IRELAND\\
 e-mail: creedon.leo@itsligo.ie, gildeajoe@gmail.com}


\date{}

\maketitle

\noindent \textbf{Abstract:} \noindent The structure of the
unitary unit group of the group algebra ${\F}_{2^k} Q_{8}$ is
described as a Hamiltonian group.  \\

\noindent \textbf{AMS Subject Classification:} 20C05, 16S34,
15A15, 15A33\\
\noindent \textbf{Key Words:} group algebra, unitary unit group, quaternion\\

\section{Introduction}

\noindent Let $KG$ denote the group ring of the group $G$ over the
field $K$. The homomorphism $\varepsilon : KG \longrightarrow K$
given by $\displaystyle{\varepsilon \left( \sum_{g \in G}a_gg
\right) = \sum_{g \in G}a_g}$ is called the augmentation mapping
of $KG$. The normalized unit group of $KG$ denoted by $V(KG)$
consists of all the invertible elements of $RG$ of augmentation 1.
For further details and background see Polcino Milies and Sehgal
\cite{me5}.

The map $*:KG\longrightarrow KG$ defined by $ \ds{\left( \sum_{g
\in G} a_g g \right)^{*} = \sum_{g \in G} a_g g^{-1}}$ is an
antiautomorphism of $KG$ of order $2$.  An element $v$ of
$V(KG)$ satisfying $v^{-1}=v^{*}$ is called unitary.  We denote by
$V_{*}(KG)$ the subgroup of $V(KG)$ formed by the unitary elements
of $KG$.

Let $char(K)$ be the characteristic of the field $K$.  In
\cite{me2}, A.Bovdi and A. Sz\'{a}kacs construct a basis for
$V_{*}(KG)$ where $char(K) > 2$.  Also A. Bovdi and L. Erdei
\cite{me1} determine the structure of $V_{*}({\F}_{2}G)$ for all
groups of order $8$ and $16$ where ${\F}_{2}$ is the Galois field
of $2$ elements . Additionally in \cite{me3}, V. Bovdi and A.L.
Rosa determine the order of $V_{*}({\F}_{2^k}G)$
 for special cases of $G$.  We establish the structure of $V_{*}({\F}_{2^k}Q_8)$
 to be ${C_2}^{4k-1} \times Q_8$ where $Q_8=\langle x,y \,|\,x^4=1,x^2=y^2,xy=y^{-1}x \rangle$ is the quaternion
 group of order $8$.

\subsection{Background}

\begin{definition}
A circulant matrix over a ring $R$ is a square $n \times n$
matrix, which takes the form
\[ \mbox{circ}(a_1,a_2,\dots,a_n) = \left( \begin{array}{ccccc }

a_1 & a_2 & a_3 & \hdots & a_n\\

a_n & a_1 & a_2 & \hdots &  a_{n-1}\\

a_{n-1} & a_n & a_1 & \hdots & a_{n-2}\\

\vdots & \vdots & \vdots & \ddots &\vdots \\

a_2 & a_3 & a_4 & \hdots & a_1

\end{array}
\right)\] \noindent where $a_i \in R$.
\end{definition}

\noindent For further details on circulant matrices see Davis
\cite{key}.

 Let $\{g_1,g_2, \ldots, g_n\}$ be a fixed listing of the
elements of a group $G$. Then the following matrix:
\[ \left( \begin{array}{ccccc }

{g_1}^{-1}g_1 & {g_1}^{-1}g_2 & {g_1}^{-1}g_3 & \hdots & {g_1}^{-1}g_n\\

{g_2}^{-1}g_1 & {g_2}^{-1}g_2 & {g_2}^{-1}g_3 & \hdots &  {g_2}^{-1}g_n\\

{g_3}^{-1}g_1 & {g_3}^{-1}g_2 & {g_3}^{-1}g_3 & \hdots & {g_3}^{-1}g_n\\

\vdots & \vdots & \vdots & \ddots &\vdots \\

{g_n}^{-1}g_1 & {g_n}^{-1}g_2 &{g_n}^{-1}g_3 & \hdots &
{g_n}^{-1}g_n

\end{array}
\right) \]

\noindent is called the matrix of $G$ (relative to this listing)
and is denoted by $M(G)$. Let $\displaystyle{w = \sum_{i=1}^{n}
\alpha_{g_i}g_i \in RG}$ where $R$ is a ring. Then the following
matrix:

\[  \left( \begin{array}{ccccc }

\alpha_{{g_1}^{-1}g_1} & \alpha_{{g_1}^{-1}g_2} & \alpha_{{g_1}^{-1}g_3} & \hdots & \alpha_{{g_1}^{-1}g_n}\\

\alpha_{{g_2}^{-1}g_1} & \alpha_{{g_2}^{-1}g_2} & \alpha_{{g_2}^{-1}g_3} & \hdots &  \alpha_{{g_2}^{-1}g_n}\\

\alpha_{{g_3}^{-1}g_1} & \alpha_{{g_3}^{-1}g_2} & \alpha_{{g_3}^{-1}g_3} & \hdots & \alpha_{{g_3}^{-1}g_n}\\

\vdots & \vdots & \vdots & \ddots &\vdots \\

\alpha_{{g_n}^{-1}g_1} & \alpha_{{g_n}^{-1}g_2}
&\alpha_{{g_n}^{-1}g_3} & \hdots & \alpha_{{g_n}^{-1}g_n}

\end{array}
\right)\]

\noindent is called the $RG$-matrix of $w$ and is denoted by
$M(RG,w)$. The following theorems can be found in \cite{me4}.

\begin{theorem}
Given a listing of the elements of a group $G$ of order $n$ there
is a ring isomorphism between $RG$ and the $n \times n$
$G$-matrices over $R$.  This ring isomorphism is given
by $\sigma : w \mapsto M(RG,w)$. Suppose $R$ has an identity. Then $w \in  RG$ is a unit if and
only if $\sigma(w)$ is a unit in $M_n(R)$.
\end{theorem}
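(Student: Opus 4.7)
The plan is to verify that $\sigma$ is a ring homomorphism and a bijection onto the set of $G$-matrices, sending $1_{RG}$ to $I_n$, and then to deduce the unit statement separately. Bijectivity is essentially a book-keeping observation: the $i$th row of $M(RG,w)$ consists of the coefficients $\alpha_{g_i^{-1}g_j}$ for $j = 1, \dots, n$, and since $\{g_i^{-1}g_j : j = 1, \dots, n\}$ exhausts $G$, any single row recovers the complete list of coefficients of $w$. This gives both injectivity of $\sigma$ and the fact that every $G$-matrix lies in its image. Additivity of $\sigma$ follows immediately from entry-wise addition, and $\sigma(1_{RG})$ is the identity matrix because $g_i^{-1}g_j = e$ iff $i = j$.

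The main calculation is multiplicativity. For $v = \sum_i \alpha_{g_i} g_i$ and $w = \sum_i \beta_{g_i} g_i$, the $(k,\ell)$-entry of $\sigma(v)\sigma(w)$ equals $\sum_{j=1}^{n} \alpha_{g_k^{-1}g_j}\,\beta_{g_j^{-1}g_\ell}$. Making the change of variable $t = g_k^{-1}g_j$, so that $g_j^{-1}g_\ell = t^{-1}(g_k^{-1}g_\ell)$ and $t$ runs over $G$ as $j$ does, the sum becomes $\sum_{t \in G} \alpha_t\,\beta_{t^{-1}(g_k^{-1}g_\ell)}$, which is precisely the coefficient of $g_k^{-1}g_\ell$ in the product $vw$, i.e.\ the $(k,\ell)$-entry of $\sigma(vw)$. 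Hence $\sigma$ is a ring isomorphism onto the $G$-matrices.

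For the unit statement, the forward direction is automatic: if $wu = uw = 1$ in $RG$, then $\sigma(w)\sigma(u) = \sigma(u)\sigma(w) = I_n$, exhibiting $\sigma(w)$ as a unit of $M_n(R)$. The main obstacle is the converse, because invertibility in $M_n(R)$ is a priori a weaker condition than invertibility inside the $G$-matrix subring --- one must show that any two-sided inverse $A$ of $\sigma(w)$ automatically lies in the image of $\sigma$. The key leverage is the rigidity noted in the first paragraph: the first row of a $G$-matrix determines all of its entries. I would define $u \in RG$ to be the unique element whose $G$-matrix has first row equal to the first row of $A$. Multiplicativity shows that $\sigma(w)\sigma(u)$ is a $G$-matrix, and its first row equals the first row of $\sigma(w)A = I_n$, namely $(1,0,\dots,0)$; by rigidity $\sigma(w)\sigma(u) = I_n$, so $wu = 1$. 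A parallel computation reading the first row of $A\sigma(w) = I_n$ and using (first row of $A$) $=$ (first row of $\sigma(u)$) yields that the first row of $\sigma(uw)$ equals $(1,0,\dots,0)$, whence $\sigma(uw) = I_n$ and $uw = 1$. Thus $w$ is a unit in $RG$.
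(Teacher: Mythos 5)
The paper gives no proof of this theorem at all --- it is quoted from Hurley \cite{me4} --- so your argument has to stand on its own. Most of it does. The isomorphism part is correct and complete: each row of a $G$-matrix lists every coefficient of $w$ exactly once, which gives injectivity and surjectivity onto the $G$-matrices, and your substitution $t = g_k^{-1}g_j$ is exactly the computation identifying the $(k,\ell)$-entry of $\sigma(v)\sigma(w)$ with the coefficient of $g_k^{-1}g_\ell$ in $vw$. Your strategy for the converse of the unit statement --- pull the inverse $A$ back into the $G$-matrix subring by matching first rows --- is also the right one, and correctly isolates the real issue (that $A$ is not a priori a $G$-matrix).

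However, one of your two concluding steps does not follow as written. You assert that the first row of $\sigma(w)\sigma(u)$ equals the first row of $\sigma(w)A$. The first row of a product $MN$ is $(\text{first row of } M)\cdot N$, so it depends on \emph{all} of the right-hand factor, not just its first row; since $\sigma(u)$ and $A$ are only known to agree in their first rows, there is no reason for $\sigma(w)\sigma(u)$ and $\sigma(w)A$ to have the same first row. (Your other computation is the sound one: the first row of $\sigma(u)\sigma(w)$ is $(\text{first row of } \sigma(u))\cdot\sigma(w) = (\text{first row of } A)\cdot\sigma(w) = $ first row of $A\sigma(w)=I_n$, so rigidity gives $\sigma(uw)=I_n$ and $uw=1$.) The gap closes in one line: once $\sigma(u)\sigma(w)=I_n$ is established, multiply on the right by $A$ to get $\sigma(u)=\sigma(u)\sigma(w)A=A$; hence $A$ is itself a $G$-matrix and $\sigma(wu)=\sigma(w)A=I_n$, giving $wu=1$. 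With that repair the proof is complete.
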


\begin{example}
Let $Q_8 = \langle x,y \,|\,x^4=1,x^2=y^2,xy=y^{-1}x \rangle$ and
$\kappa =\ds{\sum_{i=0}^{3}a_ix^{i}}+\ds{\sum_{j=0}^{3}b_jx^{j}y}
\in {\F}_{2^k}Q_8$ where $a_i,b_j \in {\F}_{2^k}$. Then
\[ \sigma( \kappa )=
\begin{pmatrix} A & B \\ C & A^T \end{pmatrix} \]
\noindent where $A=\mbox{circ}(a_0,a_1,a_2,a_3)$,
$B=\mbox{circ}(b_0,b_1,b_2,b_3)$ and
$C=\mbox{circ}(b_2,b_1,b_0,b_3)$.
\end{example}

\noindent It is important to note that if $\kappa
=\ds{\sum_{i=0}^{3}a_ix^{i}}+\ds{\sum_{j=0}^{3}b_jx^{j}y} \in
{\F}_{2^k}Q_8$ where $a_i,b_j \in {\F}_{2^k}$, then
$\sigma(\kappa^*) = (\sigma(\kappa))^T$.\\

\noindent The next result can be found in \cite{me3}

\begin{proposition}
Let $K$ be a finite field of characteristic $2$. If $Q_{2^{n+1}}=
\langle a,b\,|\,a^{2^n}=1,\,a^{2^{n-1}}=b^2,\,a^b=a^{-1} \rangle$
is the quaternion group of order $2^{n+1}$, then
\[ |V_{*}(KQ_{2^{n+1}})|=4\cdot |K|^{2^n}. \]
\end{proposition}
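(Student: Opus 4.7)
The plan is to split an element of $KG$ (with $G = Q_{2^{n+1}}$) via the decomposition $KG = R \oplus Rb$ where $R = K\langle a\rangle \cong KC_{2^n}$, translate the unitary condition $vv^* = 1$ into a pair of equations inside the commutative ring $R$, and count solutions using the local structure of $R$. Writing $v = A + Bb$ with $A, B \in R$ and using the relation $b\alpha = \alpha^* b$ (which follows from $ba = a^{-1}b$) together with $b^{-1} = a^{2^{n-1}}b$, a direct computation gives
\[
v^* = A^* + a^{2^{n-1}} B\cdot b, \qquad vv^* = (AA^* + BB^*) + (1+a^{2^{n-1}})\,AB\cdot b.
\]
Hence $v \in V_*(KG)$ if and only if $AA^* + BB^* = 1$ and $(1+a^{2^{n-1}})AB = 0$ in $R$, together with the augmentation equation $\varepsilon(A) + \varepsilon(B) = 1$.

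Next I would pass to the local-ring description $R \cong K[\pi]/(\pi^{2^n})$ obtained by setting $\pi = 1+a$. The characteristic-$2$ Frobenius identity gives $1 + a^{2^{n-1}} = (1+a)^{2^{n-1}} = \pi^{2^{n-1}}$, so the side condition becomes $AB \in (\pi^{2^{n-1}})$. Since the units of $R$ are precisely the elements outside the maximal ideal $(\pi)$, the augmentation equation forces exactly one of $A, B$ to be a unit in $R$, producing two disjoint and symmetric cases. In the case $A \in R^\times$, $B \in (\pi)$, the side condition upgrades to $B \in (\pi^{2^{n-1}})$; then $BB^* \in (\pi^{2^n}) = 0$, so the first equation collapses to $AA^* = 1$, i.e.\ $A \in V_*(KC_{2^n})$, while $B$ varies freely over $(\pi^{2^{n-1}})$, a $K$-subspace of dimension $2^{n-1}$ that contributes $|K|^{2^{n-1}}$ choices. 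The other case is symmetric, giving the reduction
\[
|V_*(KG)| = 2 \cdot |V_*(KC_{2^n})| \cdot |K|^{2^{n-1}}.
\]

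The main obstacle is then the auxiliary identity $|V_*(KC_{2^n})| = 2|K|^{2^{n-1}}$ for the cyclic subgroup algebra, which must be established separately. Since $R$ is commutative, the map $\psi : V(R) \to V(R)$ defined by $\psi(v) = vv^*$ is a group homomorphism with kernel $V_*(R)$, so it suffices to compute $|\mathrm{Im}(\psi)|$. Writing $v = 1+s$ with $s \in (\pi)$, the unitary condition becomes the functional equation $s^* = s(1+s)^{-1}$, which one expands degree by degree in the $\pi$-adic expansion of $s$ using $\pi^* = \pi(1+\pi)^{-1}$. Because $\pi^* \equiv \pi \pmod{\pi^2}$, the involution $*$ acts trivially on each graded piece $V_i/V_{i+1} \cong K$ of the filtration $V_i := 1 + (\pi^i)$, and on the bottom layer $\psi$ induces the Artin--Schreier-type map $x \mapsto x + x^2$ on $K$, whose kernel is the prime field $\F_2 \subseteq K$ of size $2$; a careful layer-by-layer analysis then accumulates the contributions to give $|\mathrm{Im}(\psi)| = |K|^{2^{n-1}-1}/2$, hence $|V_*(KC_{2^n})| = 2|K|^{2^{n-1}}$. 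Substituting back yields
\[
|V_*(KG)| = 2 \cdot 2|K|^{2^{n-1}} \cdot |K|^{2^{n-1}} = 4|K|^{2^n},
\]
as required.
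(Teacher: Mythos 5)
First, note that the paper does not prove this proposition at all: it is quoted directly from Bovdi and Rosa \cite{me3}, so your argument can only be compared with the literature, not with a proof in the paper. The first half of your proposal is correct and self-contained: the decomposition $v=A+Bb$, the formulas $v^{*}=A^{*}+a^{2^{n-1}}Bb$ and $vv^{*}=(AA^{*}+BB^{*})+(1+a^{2^{n-1}})AB\,b$, and the passage to $R\cong K[\pi]/(\pi^{2^{n}})$ with $1+a^{2^{n-1}}=\pi^{2^{n-1}}$ all check out. One slip: the augmentation equation $\varepsilon(A)+\varepsilon(B)=1$ does \emph{not} force exactly one of $A,B$ to be a unit when $|K|>2$ (take $\varepsilon(A)=\omega$ and $\varepsilon(B)=1+\omega$ with $\omega\neq 0,1$; then both are units). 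What saves the case split is the side condition: $(1+a^{2^{n-1}})AB=0$ forces $AB$ into the maximal ideal $(\pi)$, hence one of $A,B$ is a non-unit, and only then does the augmentation equation make the other one a unit. With that repair the reduction $|V_{*}(KQ_{2^{n+1}})|=2\,|V_{*}(KC_{2^{n}})|\cdot|K|^{2^{n-1}}$ is sound for $n\geq 2$ (the stated formula is in fact false for $n=1$, where $Q_{4}\cong C_{4}$ and $|V_{*}(KC_{2})|=|K|$, so such a restriction is implicitly needed).

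The genuine gap is the auxiliary identity $|V_{*}(KC_{2^{n}})|=2|K|^{2^{n-1}}$, which carries all of the arithmetic of the final answer and is only asserted. Introducing the homomorphism $\psi(v)=vv^{*}$ with kernel $V_{*}(R)$ is a good idea, but your sketch does not actually compute $|\mathrm{Im}(\psi)|$. Indeed, precisely because $*$ acts trivially on each graded piece $V_{i}/V_{i+1}$, the map $\psi$ sends $V_{i}$ into $V_{i+1}$ (and in general much deeper: the term $s+s^{*}$ has valuation $i+2^{e}$ where $2^{e}$ is the largest power of $2$ dividing $i$, while $ss^{*}$ has valuation $2i$), so $\psi$ does not induce a map on the graded pieces at all, and ``accumulating contributions'' layer by layer is not a well-defined procedure without tracking these valuation jumps and identifying the image as a subgroup of the symmetric units. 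The asserted value $|\mathrm{Im}(\psi)|=|K|^{2^{n-1}-1}/2$ is correct (for $n=2$ one checks directly that $vv^{*}=1+(c_{1}+c_{1}^{2})(\pi^{2}+\pi^{3})$, giving $|\mathrm{Im}(\psi)|=|K|/2$), but it is essentially as hard to establish for general $n$ as the proposition itself. Until this lemma is proved, or cited from the literature as the paper does for the whole statement, the argument is incomplete.
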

\bigskip

\section{The Structure of The Unitary Subgroup of ${\F}_{2^k} Q_{8}$}

\begin{proposition}
$Z(V_{*}({\mathbb{F}}_{2^k}Q_8)) \cong {C_2}^{4k}$ where
$Z(V_{*}({\mathbb{F}}_{2^k}Q_8))$ is the center of
$V_{*}({\mathbb{F}}_{2^k}Q_8)$.
\end{proposition}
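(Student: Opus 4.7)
The plan is to reduce the problem to an algebraic computation inside the commutative algebra $Z({\F}_{2^k}Q_8)$. First, since every $g\in Q_8$ satisfies $g^{-1}=g^*$, the whole group $Q_8$ lies in $V_*({\F}_{2^k}Q_8)$; as $Q_8$ generates ${\F}_{2^k}Q_8$ as an ${\F}_{2^k}$-algebra, an element of $V_*$ commutes with every element of $V_*$ if and only if it lies in $Z({\F}_{2^k}Q_8)$. Thus I need only determine $Z({\F}_{2^k}Q_8)\cap V_*({\F}_{2^k}Q_8)$.

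The conjugacy classes of $Q_8$ are $\{1\},\{x^2\},\{x,x^3\},\{y,x^2y\},\{xy,x^3y\}$, so every central element $\kappa$ has the form
\[ \kappa = a_0 + a_1(x+x^3)+a_2 x^2+b_0(y+x^2y)+b_1(xy+x^3y), \]
with coefficients in ${\F}_{2^k}$. The crucial observation is that each of these five classes is closed under inversion (e.g.\ $y^{-1}=x^2y$ and $(xy)^{-1}=x^3y$), so each basis element is $*$-fixed and therefore $\kappa^*=\kappa$. The unitary condition $\kappa^{-1}=\kappa^*$ thus collapses to $\kappa^2=1$.

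Since $Z({\F}_{2^k}Q_8)$ is commutative and of characteristic $2$, Frobenius applied termwise reduces $\kappa^2=1$ to computing the squares of the five basis elements. A direct calculation (the main routine step) yields $(x+x^3)^2=(y+x^2y)^2=(xy+x^3y)^2=0$ and $(x^2)^2=1$, so the unitary condition becomes $a_0^2+a_2^2=1$, equivalently the single linear equation $a_0+a_2=1$.

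Therefore $a_2$ is determined by $a_0$, while $a_0,a_1,b_0,b_1$ are free in ${\F}_{2^k}$, giving $(2^k)^4=2^{4k}$ elements. Every such element satisfies $\kappa^2=1$ and the group is abelian, so $Z(V_*({\F}_{2^k}Q_8))$ is elementary abelian of order $2^{4k}$, i.e.\ isomorphic to $C_2^{4k}$. The main hurdle is the square-vanishing calculation above (each class sum is of the form $a+b$ with $a^2=b^2$ in ${\F}_{2^k}Q_8$, so the cross term $2ab$ vanishes and $(a+b)^2=2a^2=0$); once that is in hand, the structural conclusion is immediate.
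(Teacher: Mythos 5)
Your proof is correct, but it takes a genuinely different route from the paper's. The paper works inside $V=V({\F}_{2^k}Q_8)$: it first computes the centralizer $C_V(x)$ by expanding $xv-vx$, then uses the circulant $RG$-matrix representation $\sigma$ to determine which elements of $C_V(x)$ commute with all of $V$, obtaining the parametrized form $1+r+sx+rx^2+sx^3+ty+uxy+tx^2y+ux^3y$, and finally checks exponent $2$ and unitarity via $\sigma(\alpha)\sigma(\alpha)^T=I$ before identifying $Z(V_*)$ with $Z(V)$. You instead observe that $Q_8\subseteq V_*({\F}_{2^k}Q_8)$ generates the algebra, so $Z(V_*)=Z({\F}_{2^k}Q_8)\cap V_*$, describe the center by class sums, note that every conjugacy class of $Q_8$ is inversion-closed so the unitary condition degenerates to $\kappa^2=1$, and finish by squaring class sums (each nontrivial class is $\{g,x^2g\}$ with $x^2$ central, so the square vanishes). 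Your approach buys several things: it avoids the matrix machinery entirely, it is shorter and more conceptual, and it explicitly supplies the inclusion $Z(V_*)\subseteq Z({\F}_{2^k}Q_8)$ that the paper leaves implicit when it asserts $Z(V_*({\F}_{2^k}Q_8))=Z(V)$ from $Z(V)\subseteq V_*$; it also generalizes to any group whose conjugacy classes are closed under inversion. What the paper's route buys is consistency with its overall $RG$-matrix toolkit (used again in the unitarity check and elsewhere) and, as a by-product, an explicit determination of $Z(V)$ itself, not just $Z(V_*)$. One tiny point worth making explicit in your write-up: $\kappa^2=1$ also forces $\varepsilon(\kappa)=1$ (since $t^2=1$ implies $t=1$ in ${\F}_{2^k}$), so your solutions really do land in the normalized unit group.
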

\begin{proof}
Let $v =
\ds{\sum_{i=0}^{3}a_ix^{i}}+\ds{\sum_{j=0}^{3}b_jx^{j}y}\in V$
where $V=V({\mathbb{F}}_{2^k}Q_8)$ and $a_i,b_j \in {\F}_{2^k}$.
$C_{V}(x)=\{ v \in V \,|\, xv=vx\}$.  Then
$xv-vx=(b_3-b_1)(y)+(b_0-b_2)xy+(b_1-b_3)x^2y+(b_2-b_0)x^3y$.  If
$\kappa=\ds{\sum_{l=0}^{3}c_lx^{l}}+d_1(y+x^2y)+d_2(xy+x^3y)$
where $\ds{\sum_{l=0}^{3}c_l=1}$ and $d_j \in \F_{2^k}$, then
$\kappa x=x \kappa$. Thus every element of $C_{V}(x)$ has the form
$\ds{\sum_{i=0}^{3}a_i x^{i}}+\gamma_1(y+x^2y)+\gamma_2(xy+x^3y)$
where $\ds{\sum_{i=0}^{3}a_i}=1$ and $\gamma_j \in {\F}_{2^k}$.

$Z(V)$ is contained in $C_{V}(x)$. Therefore $Z(V)=\{\alpha \in
C_{V}(x) \,|\, \alpha v = v\alpha \;\mbox{for all}\;v \in V\}$.
Let $\alpha = \ds{\sum_{i=0}^{3}a_i}x^{i}+b_1(y+x^2y)+b_2(xy+x^3y)
\in C_{V}(x)$ and $v= \ds{\sum_{l=0}^{3}c_lx^{l}}+
\ds{\sum_{m=0}^{3}d_m x^{m}y} \in V$ where $a_i,b_j,c_l,d_m \in
{\F}_{2^k}$. Then
\begin{align*}
\sigma(\alpha)\sigma(v)-\sigma(v)\sigma(\alpha)&=\begin{pmatrix}
 A & B \\ B & A^{T}
\end{pmatrix}\begin{pmatrix}
 C & D \\ E & C^{T}
\end{pmatrix}-\begin{pmatrix}
 C & D \\ E & C^{T}
\end{pmatrix}\begin{pmatrix}
 A & B \\ B & A^{T}
\end{pmatrix}\\
&=\begin{pmatrix} 0 & D(A-A^T) \\ E(A^T-A) & 0 \end{pmatrix}
\end{align*}

\noindent where $A=\mbox{circ}(a_0,a_1,a_2,a_3)$,
$B=\mbox{circ}(b_0,b_1,b_0,b_1)$,
$C=\mbox{circ}(c_0,c_1,c_2,c_3)$, $D=\mbox{circ}(d_0,d_1,d_2,d_3)$
and $E=\mbox{circ}(d_2,d_1,d_0,d_3)$, since circulant matrices commute and $B(E-D)=0=B(C-C^T)$.

\noindent  Therefore
$\sigma(\alpha)\sigma(v)-\sigma(v)\sigma(\alpha)=0$ if $D(A-A^T)=0$ and $E(A^T-A)=0$. It can be
shown that $D(A-A^T)=0$ and $E(A^T-A)=0$ iff $a_1=a_3$. Thus every
element of the $Z(V)$ has the form
$1+r+sx+rx^2+sx^3+ty+uxy+tx^2y+ux^3y$ where $r,s,t,u
\in{\F}_{2^k}$.  It can easily be shown that $Z(V)$ has exponent
$2$.

Now $\alpha^*=\alpha^{-1} \Longleftrightarrow
\sigma(\alpha^*)=\sigma(\alpha^{-1}) \Longleftrightarrow
(\sigma(\alpha))^T=\sigma(\alpha)^{-1} \Longleftrightarrow
\sigma(\alpha)(\sigma(\alpha))^T=I$.  Let $\alpha
=1+r+sx+rx^2+sx^3+ty+uxy+tx^2y+ux^3y \in Z(V)$ where $r,s,t,u
\in{\F}_{2^k}$.  Then
\[
\sigma(\alpha)(\sigma(\alpha))^T  = \begin{pmatrix}
 A & B \\ B & A
\end{pmatrix} \begin{pmatrix}
 A & B \\ B & A
\end{pmatrix}^T=\begin{pmatrix}
 A^2+B^2 & 0 \\ 0 & A^2+B^2
\end{pmatrix}=\begin{pmatrix}I&0\\0&I \end{pmatrix}\]
\noindent where $A=\mbox{circ}(1+r,s,r,s)$,
$B=\mbox{circ}(t,u,t,u)$. Therefore $Z(V)
\subset V_{*}({\mathbb{F}}_{2^k}Q_8)$.

Thus $Z(V_{*}({\mathbb{F}}_{2^k}Q_8))=Z(V)$ and
$Z(V_{*}({\mathbb{F}}_{2^k}Q_8)) \cong {C_2}^{4k}$.
\end{proof}

\noindent We can now construct the following subgroup lattice of
$V_{*}({\mathbb{F}}_{2^k}Q_8):$\\

\xy (8,0): (-8,0)*+{},(0,1)*+{ V_{*}({\mathbb{F}}_{2^k}Q_8)
Q_8}="b",
(1,-1)*+{Q_8}="c",(-1,0)*+{Z(V_{*}({\mathbb{F}}_{2^k}Q_8))}="d",
(0,-2)*+{Z(V_{*}({\mathbb{F}}_{2^k}Q_8)) \cap Q_8  = \{1,x^2\}
}="e", (0,-3)*+{1}="f",\ar @{=} "b";"d",\ar @{=} "b";"c",\ar @{=}
"d";"e",\ar @{=} "c";"e",\ar @{=} "e";"f",
\endxy
\bigskip

\begin{proposition}
 $Z(V_{*}({\mathbb{F}}_{2^k}Q_8)). Q_8 = V_{*}({\mathbb{F}}_{2^k}Q_8)$.
\end{proposition}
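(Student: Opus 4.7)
The plan is to prove the equality by a cardinality argument. First I would verify the two containments needed to interpret $Z(V_{*}({\F}_{2^k}Q_8))\cdot Q_8$ as a subgroup of $V_{*}({\F}_{2^k}Q_8)$. The centre is contained in $V_{*}({\F}_{2^k}Q_8)$ by the previous proposition, and $Q_8 \subseteq V_{*}({\F}_{2^k}Q_8)$ because every $g \in Q_8$ satisfies $g^{*} = g^{-1}$ by the definition of the involution. Since $Z:=Z(V_{*}({\F}_{2^k}Q_8))$ is central in $V({\F}_{2^k}Q_8)$ (hence central in $V_{*}({\F}_{2^k}Q_8)$), the set $Z\cdot Q_8$ is indeed a subgroup.

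Next I would count both sides. From the previous proposition, $|Z| = 2^{4k}$. The proposition of V.\ Bovdi and A.L.\ Rosa quoted in the introduction, applied with $n=2$ and $K={\F}_{2^k}$, yields
\[
|V_{*}({\F}_{2^k}Q_8)| = 4\cdot (2^k)^{4} = 2^{4k+2}.
\]
So it suffices to show $|Z\cdot Q_8| = 2^{4k+2}$, which by the product formula $|Z\cdot Q_8| = |Z|\,|Q_8|/|Z\cap Q_8|$ reduces to showing $|Z\cap Q_8| = 2$.

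To pin down $Z\cap Q_8$ I would use the explicit description obtained in the previous proof: every element of $Z$ has the form
\[
1+r+sx+rx^2+sx^3+ty+uxy+tx^2y+ux^3y, \qquad r,s,t,u\in{\F}_{2^k}.
\]
Checking the eight elements of $Q_8$ against this template (working in characteristic $2$) shows that $1$ corresponds to $r=s=t=u=0$ and $x^2$ corresponds to $r=1,\,s=t=u=0$, while each of $x,x^3,y,xy,x^2y,x^3y$ forces an inconsistent choice of the parameters. Hence $Z\cap Q_8 = \{1,x^2\}$, exactly as indicated in the subgroup lattice.

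Combining these counts gives $|Z\cdot Q_8| = 2^{4k}\cdot 8/2 = 2^{4k+2} = |V_{*}({\F}_{2^k}Q_8)|$, and together with the containment $Z\cdot Q_8 \subseteq V_{*}({\F}_{2^k}Q_8)$ this forces equality. There is no substantive obstacle here; the only non-routine step is the elementwise verification of $Z\cap Q_8=\{1,x^2\}$, and this is essentially a finite check enabled by the explicit formula for elements of $Z$ already established.
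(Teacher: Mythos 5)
Your proposal is correct and follows essentially the same route as the paper: both arguments reduce the claim to the count $|Z\cdot Q_8| = |Z|\,|Q_8|/|Z\cap Q_8| = 2^{4k}\cdot 8/2 = 2^{4k+2}$ (the paper phrases this via the second isomorphism theorem) and then match this against $|V_{*}({\mathbb{F}}_{2^k}Q_8)| = 2^{4k+2}$ from the quoted Bovdi--Rosa result. You merely supply details the paper leaves implicit, namely the verification that $Q_8 \subseteq V_{*}({\mathbb{F}}_{2^k}Q_8)$ and the elementwise check that $Z\cap Q_8 = \{1,x^2\}$, which the paper only records in its lattice diagram.
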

\begin{proof} By
the second isomorphism theorem
$Z(V_{*}({\mathbb{F}}_{2^k}Q_8)).Q_8/Z(V_{*}({\mathbb{F}}_{2^k}Q_8))
$ $\cong Q_8/Z(V_{*}({\mathbb{F}}_{2^k}Q_8)) \cap Q_8$.
$|Q_8/Z(V_{*}({\mathbb{F}}_{2^k}Q_8)) \cap Q_8| =
\ds{\frac{8}{2}}= 4$. Therefore
$|Z(V_{*}({\mathbb{F}}_{2^k}Q_8)).Q_8| = 4.2^{4k}=2^{4k+2}$.
Therefore $Z(V_{*}({\mathbb{F}}_{2^k}Q_8)). Q_8 =
V_{*}({\mathbb{F}}_{2^k}Q_8)$.
\end{proof}

\begin{theorem}
$V_{*}({\mathbb{F}}_{2^k}Q_8) \cong {C_{2}}^{4k-1} \times Q_8$.
\end{theorem}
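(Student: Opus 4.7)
The plan is to realise $V_{*}(\mathbb{F}_{2^k}Q_8)$ as an internal direct product of a central elementary abelian subgroup with the copy of $Q_8$ sitting inside it. The main ingredients are already in place: from the previous proposition and Proposition $3$ of the Background, the orders satisfy $|V_{*}(\mathbb{F}_{2^k}Q_8)|=2^{4k+2}$ and $|Z(V_{*}(\mathbb{F}_{2^k}Q_8))|=2^{4k}$; the intersection $Z(V_{*}(\mathbb{F}_{2^k}Q_8))\cap Q_8=\{1,x^2\}$ was recorded in the lattice; and the proof that $Z(V_{*}(\mathbb{F}_{2^k}Q_8))$ has exponent $2$ is already noted, so $Z(V_{*}(\mathbb{F}_{2^k}Q_8))$ is elementary abelian, isomorphic to $C_2^{4k}$.

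First I would exploit the fact that, as an elementary abelian $2$-group, $Z(V_{*}(\mathbb{F}_{2^k}Q_8))$ is an $\mathbb{F}_2$-vector space. The subgroup $\langle x^2\rangle$ is a one-dimensional subspace, so it has a direct complement: choose $A\le Z(V_{*}(\mathbb{F}_{2^k}Q_8))$ with $A\oplus\langle x^2\rangle=Z(V_{*}(\mathbb{F}_{2^k}Q_8))$. In multiplicative language, $A\cong C_2^{4k-1}$ and $A\cdot\langle x^2\rangle=Z(V_{*}(\mathbb{F}_{2^k}Q_8))$ with $A\cap\langle x^2\rangle=\{1\}$.

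Next I would verify that $A$ is a complement to $Q_8$ in $V_{*}(\mathbb{F}_{2^k}Q_8)$. Since $A\subseteq Z(V_{*}(\mathbb{F}_{2^k}Q_8))$, any element of $A\cap Q_8$ lies in $Z(V_{*}(\mathbb{F}_{2^k}Q_8))\cap Q_8=\{1,x^2\}$, and since $A\cap\langle x^2\rangle=\{1\}$ by construction, we obtain $A\cap Q_8=\{1\}$. Combining $A\cdot\langle x^2\rangle=Z(V_{*}(\mathbb{F}_{2^k}Q_8))$ with the previous proposition $Z(V_{*}(\mathbb{F}_{2^k}Q_8))\cdot Q_8=V_{*}(\mathbb{F}_{2^k}Q_8)$ and $\langle x^2\rangle\subseteq Q_8$ gives $A\cdot Q_8=V_{*}(\mathbb{F}_{2^k}Q_8)$; this can equivalently be checked by the order count $|A|\cdot|Q_8|/|A\cap Q_8|=2^{4k-1}\cdot 8=2^{4k+2}=|V_{*}(\mathbb{F}_{2^k}Q_8)|$.

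Finally, because $A$ is central in $V_{*}(\mathbb{F}_{2^k}Q_8)$, both $A$ and $Q_8$ are normal subgroups, they intersect trivially, and their product is the whole group, so $V_{*}(\mathbb{F}_{2^k}Q_8)=A\times Q_8\cong C_2^{4k-1}\times Q_8$. I do not anticipate any real obstacle here: the non-trivial content of the theorem was absorbed into the two preceding propositions (identifying the centre and showing it together with $Q_8$ generates $V_{*}(\mathbb{F}_{2^k}Q_8)$), and this final step is a standard splitting argument made possible by the centre being elementary abelian.
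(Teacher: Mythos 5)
Your proposal is correct and follows essentially the same route as the paper: both choose a complement to $\langle x^2\rangle$ inside the elementary abelian centre (the paper does this by extending $x^2$ to an $\mathbb{F}_2$-basis and taking the span of the remaining basis vectors) and then show this complement together with $Q_8$ gives an internal direct product decomposition of $V_{*}({\mathbb{F}}_{2^k}Q_8)$. Your write-up is in fact slightly more careful than the paper's, since you justify $A\cap Q_8=\{1\}$ via $Z(V_{*}({\mathbb{F}}_{2^k}Q_8))\cap Q_8=\{1,x^2\}$ rather than merely asserting it.
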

\begin{proof}
$Z(V_{*}({\mathbb{F}}_{2^k}Q_8)) \cong {C_2}^{4k}$ is a vector
space over ${\F}_2$ of dimension $4k$.  Let
$\{x_1,x_2,\ldots,x_{4k}=x^2\}$ be a basis for this vector space.
Therefore $Z(V_{*}({\mathbb{F}}_{2^k}Q_8)) = \langle
x_1,x_2,\ldots,x_{4k} \rangle$.  Let $G=\langle
x_1,x_2,\ldots,x_{4k-1} \rangle$, then $G \cong {C_2}^{4k-1}$ and
$Z(V_{*}({\mathbb{F}}_{2^k}Q_8)) \cong G \times \langle x_{4k}
\rangle \cong G \times \langle x^2 \rangle$. Now $G \cap Q_8 =
\{1\}$ and $V_{*}({\mathbb{F}}_{2^k}Q_8)=G.Q_8$,  therefore
$V_{*}({\mathbb{F}}_{2^k}Q_8) \cong G \rtimes Q_8 \cong G \times
Q_8$ since $G <Z(V_{*}({\mathbb{F}}_{2^k}Q_8))$.  Thus
$V_{*}({\mathbb{F}}_{2^k}Q_8) \cong {C_{2}}^{4k-1} \times Q_8$.
\end{proof}

\noindent The authors wish to acknowledge some useful comments
made by Mazi Shirvani.


\begin{thebibliography}{99}

\bibitem[1]{me1} A.A. Bovdi, L. Erdei, {\em Unitary units in the Modular Group Algebra of
Groups of Order 16}, Technical Reports Debrecen, Dept. of Math.,
L. Kossuth Univ., {\bf 96/4}, 1996, 57-72.



\bibitem[2]{me2} A.A. Bovdi, A. Szak\'{a}cs, {\em Unitary
Subgroup of the group of units of a modular group algebra of a
finite abelian $p$-group}, Math. Zametki \textbf{45},No. 6 (1989),
23-29.

\bibitem[3]{me3} V. Bovdi, A.L. Rosa,
{\em On The Order of The Unitary Subgroup of a Modular Group
Algebra}, Comm. Algebra , {\bf 28/4},2000, 1897-1905.

\bibitem[4]{key}  P.J. Davis, {\em Circulant Matrices},
Chelsea Publishing, New York (1979).

\bibitem[5]{me4} T. Hurley, Group Rings and Rings of Matrices, Int. J. Pure Appl. Math., {\bf 31}, (2006), 319-335.

\bibitem[6]{me5} C.~P. Milies and S.~K. Sehgal, {\em An introduction to
Group Rings}, Kluwer Academic Publishers, 2002.


\end{thebibliography}
\end{document}